\newcommand{\mc}{\mathcal}
\newcommand{\mb}{\mathbb}
\theoremstyle{plain}
\newtheorem{theorem}{Theorem}[section]
\newtheorem{lemma}[theorem]{Lemma}
\newtheorem{corollary}[theorem]{Corollary}
\theoremstyle{definition}
\newtheoremstyle{named}{}{}{\itshape}{}{\bfseries}{.}{.5em}{\thmnote{#3's }#1}
\theoremstyle{named}
\begin{document}

\title{{\bf \large{TOPOLOGICAL GENERICITY OF NOWHERE DIFFERENTIABLE FUNCTIONS IN THE DISC ALGEBRA} \vspace{4mm}}}
\author{Alexandros Eskenazis}
\date{}
\maketitle

\begin{abstract}
\noindent In this paper we introduce a class of functions contained in the disc algebra $\mc{A}(D)$. We study functions $f \in \mc{A}(D)$, which have the property that the continuous periodic function $u = Ref|_{\mb{T}}$, where $\mb{T}$ is the unit circle, is nowhere differentiable. We prove that this class is non-empty and instead, generically, every function $f \in \mc{A}(D)$ has the above property. Afterwards, we strengthen this result by proving that, generically, for every function $f \in \mc{A}(D)$, both continuous periodic functions $u=Ref|_\mb{T}$ and $\tilde{u} = Imf|_\mb{T}$ are nowhere differentiable. We avoid any use of the Weierstrass function and we mainly use Baire's Category Theorem.
\end{abstract}

\section{Introduction}

The existence and properties of strange functions have attracted the interest of mathematicians since Weierstrass, who first gave an example of a function defined on $\mb{R}$ which was continuous, periodic but not differentiable at any real number. Several years later, Banach and Mazurkiewicz independently proved, in \cite{ban} and \cite{maz} respectively, that this is in fact the "generic" case:

\begin{theorem} [Banach-Mazurkiewicz]
Let J be a compact interval on $\mb{R}$ and $\mc{C}_{\mb{R}}(J)$ the space of real-valued continuous functions on J, endowed with the supremum norm. Then the class of all functions $f \in \mc{C}_{\mb{R}}(J)$ which are nowhere differentiable is residual in $\mc{C}_{\mb{R}}(J)$, i.e. it contains a $G_{\delta}$ and dense subset of $\mc{C}_{\mb{R}}(J)$.
\end{theorem}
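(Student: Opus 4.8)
The plan is to realise the nowhere differentiable functions as a residual set by exhibiting the complementary class of \emph{somewhere} differentiable functions as a countable union of nowhere dense sets, and then to invoke Baire's Category Theorem in the complete metric space $\mc{C}_{\mb{R}}(J)$. For each $n \in \mb{N}$ I would introduce the set
$$E_n = \left\{ f \in \mc{C}_{\mb{R}}(J) : \exists\, x_0 \in J \text{ with } |f(x) - f(x_0)| \leq n|x - x_0| \text{ for all } x \in J \right\}.$$
The elementary observation driving the proof is that if $f$ is differentiable at some point $x_0$, then the difference quotients $\tfrac{f(x)-f(x_0)}{x-x_0}$ stay bounded as $x \to x_0$; combining this with the crude bound $|f(x)-f(x_0)| \leq 2\|f\|_\infty$ for $x$ bounded away from $x_0$ (valid since $J$ is compact and $f$ continuous), one finds an $n$ with $f \in E_n$. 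Hence the set of functions differentiable at even a single point is contained in $\bigcup_{n=1}^\infty E_n$, and it suffices to prove that each $E_n$ is nowhere dense.

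The first step would be to check that each $E_n$ is closed in the supremum norm. Given a sequence $(f_k)$ in $E_n$ converging uniformly to $f$, I would pick a witnessing point $x_k \in J$ for each $f_k$; since $J$ is compact, a subsequence $x_{k_j}$ converges to some $x_0 \in J$. Fixing $x \in J$ and passing to the limit in the inequality $|f_{k_j}(x) - f_{k_j}(x_{k_j})| \leq n|x - x_{k_j}|$, using the uniform convergence of the $f_{k_j}$ together with the continuity of $f$, yields $|f(x) - f(x_0)| \leq n|x-x_0|$, so $f \in E_n$.

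The main work, and the step I expect to be the principal obstacle, is to show that each $E_n$ has empty interior, i.e. that its complement is dense. Given $f \in \mc{C}_{\mb{R}}(J)$ and $\varepsilon > 0$, I would first approximate $f$ uniformly within $\varepsilon/2$ by a piecewise linear function $p$, which is possible by the uniform continuity of $f$ on $J$; let $M$ bound the (finitely many) slopes of $p$. I would then perturb $p$ by adding a continuous, small-amplitude ''zigzag'' function $\phi$, a sawtooth whose pieces alternate slope $\pm s$ with $s > n + M$, chosen with period small enough that $\|\phi\|_\infty < \varepsilon/2$. Every linear piece of $g := p + \phi$ then has slope exceeding $n$ in absolute value, since on a piece where $p$ has slope $a$ with $|a|\le M$ one has $|a \pm s| > n$. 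Consequently, for any candidate point $x_0 \in J$ there are points $x$ arbitrarily close to $x_0$ with difference quotient of $g$ exceeding $n$ in modulus, so no $x_0$ can satisfy the global Lipschitz bound defining $E_n$; thus $g \notin E_n$ while $\|f - g\|_\infty < \varepsilon$. Controlling all the slopes simultaneously while keeping the amplitude small, so that the oscillation is fine enough across each segment of $p$, is the delicate part of the construction.

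With these two facts in hand the conclusion is immediate: each $E_n$ is closed with empty interior, hence nowhere dense, so $\bigcup_n E_n$ is of the first category by Baire's Category Theorem. Its complement is therefore a dense $G_\delta$ subset of $\mc{C}_{\mb{R}}(J)$ consisting entirely of nowhere differentiable functions, which establishes the claimed residuality.
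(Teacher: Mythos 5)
Your proposal is correct and follows essentially the same Baire-category scheme that the paper sketches for this classical result and then imitates in its own proof of Theorem 1.2: decompose the exceptional set via sets $E_n$ defined by an $n$-Lipschitz condition at a witnessing point, prove each is closed with empty interior (the latter by approximating with a piecewise linear function and adding a small-amplitude sawtooth of slope exceeding $n$ plus the maximal slope), and conclude by Baire's theorem. The paper itself only cites Banach and Mazurkiewicz for this statement rather than proving it, but your argument is the standard one it has in mind, differing only cosmetically in that you work with the closed nowhere dense ``bad'' sets while the paper's Section 2 analogue works with the complementary open dense sets $E_n$.
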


\noindent The proof of the above theorem is a classical application of Baire's Category Theorem: in order to prove that there is (at least one) continuous but nowhere differentiable function, we instead consider the class $\mc{L}$ of all such functions, which is possibly empty. Then we give a set theoretic description of a subset $\mc{S}_0 \subseteq \mc{L}$ and using Baire's Theorem we prove that $\mc{S}_0$ is $G_{\delta}$ and dense in $\mc{C}_{\mb{R}}(J)$; therefore we know that $\mc{S}_0 \neq \emptyset$ without having found any particular element of $\mc{S}_0$ or $\mc{L}$. For the role of Baire's Theorem in Analysis we refer to \cite{grosse} and \cite{kahane}.

The context in which we will work is the disc algebra $\mc{A}(D)$, i.e. the set of all holomorphic functions on the open unit disk $D$ which extend continuously on $\overline{D}$. We endow $\mc{A}(D)$ with the supremum norm; thus $\mc{A}(D)$ becomes a Banach algebra and Baire's Theorem is at our disposal. We will present an analogue of Theorem 1.1 in this setting. For each function $f \in \mc{A}(D)$ we can naturally construct a $2\pi-$periodic function $h: \mb{R} \to \mb{C}$ defined by $h(\theta) = f(e^{i\theta})$; often, by abuse of notation, we write $f(\theta)$ instead of $h(\theta)$. From the definition of $\mc{A}(D)$ this function $h$ is of course continuous. Taking into consideration the above theorem, a natural question to ask is whether there are functions $f \in \mc{A}(D)$ such that the corresponding function $h$ is nowhere differentiable. Of course, if such a function $h$ is not differentiable at a point $\theta \in \mb{R}$, then either $Reh$ is not differentiable at $\theta$ or $Imh$ is not differentiable at $\theta$. Our main result is the following:

\begin{theorem}
Let $E$ be the class of all functions $f \in \mc{A}(D)$ such that the function $u=Ref|_{\mb{T}}$ is not differentiable at any point $\theta \in \mb{R}$. Then $E$ is residual in $\mc{A}(D)$, i.e. it contains a $G_{\delta}$ and dense subset.
\end{theorem}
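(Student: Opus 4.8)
The plan is to imitate the proof of the Banach--Mazurkiewicz theorem, replacing the sawtooth perturbations used there by high--frequency monomials of the disc algebra. Write $u_f = Re\, f|_{\mathbb{T}}$ and, as in the statement, regard $u_f$ as a $2\pi$--periodic function of $\theta$ via $u_f(\theta)=Re\, f(e^{i\theta})$. For each $n\in\mathbb{N}$ I would introduce the set
\[
A_n=\Big\{f\in\mathcal{A}(D):\ \exists\,\theta_0\in\mathbb{R}\ \text{with}\ |u_f(\theta_0+t)-u_f(\theta_0)|\le n|t|\ \text{for all}\ 0<|t|\le 1\Big\}.
\]
If $u_f$ is differentiable at some point $\theta_0$, then its difference quotients stay bounded near $0$, while for the remaining $t$ one uses $|u_f(\theta_0+t)-u_f(\theta_0)|\le 2\|f\|_\infty$; hence $f\in A_n$ for $n$ large. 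Consequently $\mathcal{A}(D)\setminus\bigcup_n A_n\subseteq E$, and it suffices to prove that each $A_n$ is nowhere dense, so that $\bigcap_n(\mathcal{A}(D)\setminus A_n)$ is a dense $G_\delta$ contained in $E$. Here Baire's Theorem enters, exactly as announced in the introduction.

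Next I would verify that each $A_n$ is closed. Given $f_k\in A_n$ with $f_k\to f$, the inequality $\|u_{f_k}-u_f\|_\infty\le\|f_k-f\|_\infty$ gives uniform convergence $u_{f_k}\to u_f$. Picking witnesses $\theta_k$ and using compactness of $\mathbb{T}$ I would pass to a subsequence $\theta_k\to\theta_0$; for each fixed $t$ the defining inequality survives in the limit because $u_{f_k}(\theta_k+t)\to u_f(\theta_0+t)$ and $u_{f_k}(\theta_k)\to u_f(\theta_0)$, by joint uniform convergence and continuity. Thus $\theta_0$ witnesses $f\in A_n$ and $A_n$ is closed.

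The heart of the argument is to show that $\mathcal{A}(D)\setminus A_n$ is dense, i.e. that every $f$ can be approximated by some $g\notin A_n$. First I would replace $f$ by a polynomial $p$ with $\|f-p\|_\infty<\varepsilon/2$; then $Re\, p|_{\mathbb{T}}$ is smooth, hence Lipschitz with some constant $M=M(p)$. I would then set $g=p+\delta z^m$ with $\delta=\varepsilon/2$, so that $\|f-g\|_\infty<\varepsilon$ and $u_g(\theta)=Re\,p(e^{i\theta})+\delta\cos(m\theta)$. For a given $\theta_0$, writing $\alpha=m\theta_0$, a short computation shows that one of the two increments at $t=\pi/m$ or $t=\pi/(2m)$ satisfies $\delta|\cos(m(\theta_0+t))-\cos(m\theta_0)|\ge\delta/2$, because $\max\{2|\cos\alpha|,\,|\sin\alpha+\cos\alpha|\}\ge 1/2$ for every $\alpha$. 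Since $|Re\,p(\theta_0+t)-Re\,p(\theta_0)|\le M|t|$ with $|t|\le\pi/m$, the same $t$ gives
\[
|u_g(\theta_0+t)-u_g(\theta_0)|\ \ge\ \tfrac{\delta}{2}-M|t|\ >\ n|t|
\]
as soon as $\delta/2>(n+M)\pi/m$, which holds once $m$ is chosen large enough. As $\theta_0$ was arbitrary, no point can witness $g\in A_n$, so $g\notin A_n$. This proves density, completes the nowhere--density of $A_n$, and establishes the theorem.

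I expect the genuine difficulty to be this last step: unlike in the classical setting we are not free to perturb $Re\, f$ by an arbitrary continuous function, since the perturbation must be the boundary real part of an element of $\mathcal{A}(D)$. The monomial $\delta z^m$ supplies such an admissible perturbation whose real part oscillates rapidly, but it vanishes to first order at many points, so the crux is the elementary trigonometric estimate guaranteeing that at \emph{every} $\theta_0$ at least one of finitely many nearby increments is large. Everything else (closedness, the reduction via Baire) is routine.
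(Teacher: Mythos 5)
Your argument is correct, and while its Baire-category skeleton (closed ``bad'' sets $A_n$ with dense complement, polynomials dense in $\mc{A}(D)$) matches the paper's, the crucial density step is genuinely different and considerably lighter. The paper perturbs $Re\,f|_{\mb{T}}$ by a real sawtooth $s(\theta)=\varepsilon\,\mathrm{dist}(\theta,\tfrac{\pi}{R}\mb{Z})$, which is \emph{not} a priori the boundary real part of a disc-algebra element; it must therefore first replace $Re\,f|_{\mb{T}}$ by a piecewise linear function with derivative control (its Lemma 2.4), and then estimate the harmonic conjugates of both the error and the sawtooth via Cauchy--Schwarz, Parseval, and an explicit computation of the Fourier coefficients $\widehat{s}(2\lambda R)$. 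You sidestep all of that by perturbing with the monomial $\delta z^m$, which is already in $\mc{A}(D)$, so $\|\delta z^m\|_\infty=\delta$ controls the perturbation with no conjugate-function analysis whatsoever; the price is that $\cos(m\theta)$, unlike a sawtooth, has increments that degenerate near its critical points, which you repair with the pointwise estimate $\max\{2|\cos\alpha|,\,|\sin\alpha+\cos\alpha|\}\ge 1/2$ (valid: if $|\cos\alpha|<1/4$ then $|\sin\alpha|>\sqrt{15}/4$, so $|\sin\alpha+\cos\alpha|>(\sqrt{15}-1)/4>1/2$). The quantitative bookkeeping checks out: $\delta/2-M|t|>n|t|$ for $|t|\le\pi/m$ once $m>2\pi(n+M)/\delta$, and the reduction ``differentiable at a point $\Rightarrow f\in A_n$ for some $n$'' and the closedness of $A_n$ are handled exactly as in the classical Banach--Mazurkiewicz proof and in the paper's Lemma 2.2. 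In short, your route trades the paper's Fourier-analytic control of harmonic conjugates for an elementary trigonometric inequality, which is arguably the more economical proof; the paper's sawtooth construction, on the other hand, is closer in spirit to the classical real-variable argument and its conjugate-function estimates are of independent interest.
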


\noindent As a corollary of this result, we prove that also the class $E_1 \subseteq E$ which consists of all the functions $f \in \mc{A}(D)$ such that neither $u=Ref|_{\mb{T}}$, nor $\tilde{u} = Imf|_{\mb{T}}$ are differentiable at any point $\theta \in \mb{T}$ is also residual. 

Our proof of the above theorem makes use only of Baire's Category Theorem and some elementary Fourier Analysis; it follows the pattern of the proof of Theorem 1.1. The essential point in our proof, is to show that we can approximate every periodic real valued function of class $C^1$ defined on $\mb{R}$ by a piecewise linear periodic continuous function in a manner such that a good approximation also holds for the harmonic conjugates. Afterwards, as done in the proof of Theorem 1.1, we approximate this piecewise linear function by another which has much bigger slopes. A simple calculation gives that this approximation also holds for the harmonic conjugates and thus the proof is complete. 

Simpler proofs of the same result could be given using the fact that $E$ is non empty, i.e. starting with a particular element $f_0 \in E$, which can easily be constructed using the classical Weierstrass function (see \cite{tit} for example). Also, an interesting question to ask is what results, analogue to the above, can one have in the case of several variables. In this case, the context is the polydisc algebra $\mc{A}(D^I)$ and, for a function $f \in \mc{A}(D^I)$, we want to examine the differentiability of its restriction $f|_{\mb{T}^I}$ as a function of the real vector $\theta = (\theta_i)_{i \in I} \in \mb{T}^I$. These questions will be answered in a future paper.

The above result establishes the topological genericity of nowhere differentiable functions in the disc algebra. After this result is proven, it is a reasonable question to ask whether this class is also generic in other senses. One could examine the dense lineability of this class, i.e. the existence of a dense linear subspace of $\mc{A}(D)$, every non-zero element of which is nowhere differentiable as well as the spaceability of this class, i.e. the existence of a closed infinite dimentional linear subpsace of $\mc{A}(D)$ with the above property. Finally, since $\mc{A}(D)$ is an algebra, it would be interesting to study the algebrability of this class, i.e. to examine whether the above questions hold in the case where linear subspaces were replaced by subalgebras. These will hopefully be examined in a future paper. Similar results in other spaces can be found in \cite{aron}, \cite{bayart} and \cite{gonz}.

\section{The proof of Theorem 1.2}

We will present the proof of the theorem by a series of lemmas following the outline of the proof of Theorem 1.1. For $n \in \mathbb{N}$ consider the sets
\begin{multline}
D_n = \Big\{ u \in \mc{C}_{\mb{R}}(\mb{T}) : \mbox{for every} \ \theta \in \mathbb{R} \ \mbox{there is a } y \in \left( \theta, \theta + \frac{1}{n} \right) \\ \mbox{such that } \ |u(y) - u(\theta)| > n | y -\theta| \Big\}
\end{multline}
and
\begin{equation}
E_n = \{f \in \mc{A}(D) :  Ref|_{\mb{T}} \in D_n \},
\end{equation}
where, as usual, we interpret functions defined on $\mb{T}$ as $2\pi-$periodic functions defined on $\mb{R}$.
\begin{lemma}
The following inclusion is valid:
\begin{equation}
\mc{S} = \bigcap_{n=1}^{\infty} E_n \subseteq E.
\end{equation}
\end{lemma}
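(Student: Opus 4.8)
The plan is to prove the inclusion directly: I would fix an arbitrary $f \in \mc{S}$ and show that $u = Ref|_{\mb{T}}$ fails to be differentiable at every point, so that $f \in E$. Since $f \in \mc{S} = \bigcap_{n=1}^\infty E_n$, by definition $u \in D_n$ for every $n \in \mb{N}$, and this is the only information about $f$ that I will use. I would argue by contradiction, assuming that $u$ is differentiable at some point $\theta_0 \in \mb{R}$, and then derive a conflict with the defining property of $D_n$ for a suitably large index $n$.

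The key observation is that differentiability at a point yields a local bound on the difference quotients. Indeed, if $u'(\theta_0)$ exists, then
\[
\frac{u(y) - u(\theta_0)}{y - \theta_0} \longrightarrow u'(\theta_0) \quad \text{as } y \to \theta_0,
\]
so this quotient stays bounded near $\theta_0$: there exist $\delta > 0$ and $M > 0$ with $|u(y) - u(\theta_0)| \leq M|y - \theta_0|$ whenever $0 < |y - \theta_0| < \delta$. I would then choose an index $n$ so large that simultaneously $n > M$ and $\frac{1}{n} < \delta$. Applying the membership $u \in D_n$ at the point $\theta_0$ produces a $y \in \left(\theta_0, \theta_0 + \frac{1}{n}\right)$ satisfying $|u(y) - u(\theta_0)| > n|y - \theta_0|$. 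But such a $y$ obeys $0 < y - \theta_0 < \frac{1}{n} < \delta$, so the local bound applies and forces $|u(y) - u(\theta_0)| \leq M|y - \theta_0| < n|y - \theta_0|$, contradicting the previous inequality.

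Since $\theta_0$ was arbitrary, this shows that $u$ is nowhere differentiable, hence $f \in E$, which establishes the inclusion. I do not expect a genuine obstacle here: this is the routine ``easy'' half of the standard Baire category scheme, in which the sets $D_n$ have been engineered precisely so that their intersection captures non-differentiability. The only point requiring a moment's care is that $D_n$ is phrased with the one-sided interval $\left(\theta, \theta + \frac{1}{n}\right)$; this causes no trouble, since differentiability at $\theta_0$ bounds the difference quotient on both sides of $\theta_0$, in particular for the right-hand points $y > \theta_0$ that the definition selects. The substantive work will instead lie in the later lemmas, namely showing that $\mc{S}$ is $G_\delta$ and dense in $\mc{A}(D)$.
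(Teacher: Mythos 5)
Your proof is correct and is essentially the paper's argument in contrapositive form: the paper directly extracts, for each $\theta$, a sequence $y_n \to \theta^+$ with $\left|\frac{u(y_n)-u(\theta)}{y_n-\theta}\right| > n$ and concludes non-differentiability from the unboundedness of these quotients, whereas you assume differentiability at $\theta_0$ and contradict the same inequality for large $n$. The two are logically interchangeable and rest on the identical observation about the sets $D_n$.
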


\begin{proof}
Let $f \in \mc{S}$. Then, for every $n \in \mathbb{N}$ and every $\theta \in \mb{R}$, there is a $y_n = y_n(\theta)$ with $\theta < y_n < \theta + \frac{1}{n}$ such that
\begin{equation}
\left| \frac{u(y_n)-u(\theta)}{y_n-\theta} \right| > n,
\end{equation}
where $u=Ref|_{\mb{T}}$. Then, $\{y_n\}$ converges to $\theta$ and, from (4), we deduce that $u$ is not differentiable at $\theta$, i.e. $f \in E$.
\end{proof}

\noindent We will prove that $\mc{S}$ is $G_{\delta}$ and dense in $\mc{A}(D)$; from this our result will follow.

\begin{lemma}
For every $n \in \mb{N}$, the set $E_n$ is open in $\mc{A}(D)$, endowed with the supremum norm.
\end{lemma}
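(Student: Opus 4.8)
The plan is to realise $E_n$ as the preimage of an auxiliary set under a continuous map, and then to prove that this auxiliary set is open in $\mc{C}_{\mb{R}}(\mb{T})$. First I would introduce the set
\[
D_n = \Big\{ u \in \mc{C}_{\mb{R}}(\mb{T}) : \forall \theta\ \exists y \in (\theta, \theta + \tfrac1n)\ \text{with}\ |u(y)-u(\theta)| > n|y-\theta| \Big\},
\]
so that $E_n = \Phi^{-1}(D_n)$ for the map $\Phi : \mc{A}(D) \to \mc{C}_{\mb{R}}(\mb{T})$, $\Phi(f) = \mathrm{Re}\, f|_{\mb{T}}$. I would observe that $\Phi$ is norm-nonincreasing: by the maximum modulus principle the supremum norm of $f$ on $\overline{D}$ equals its norm on $\mb{T}$, and passing to real parts only decreases the modulus pointwise, so $\|\Phi(f)-\Phi(g)\|_\infty \le \|f-g\|_\infty$. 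Hence $\Phi$ is continuous, and it suffices to show that $D_n$ is open; the openness of $E_n$ then follows at once.

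To analyse $D_n$, for $u \in \mc{C}_{\mb{R}}(\mb{T})$ and $\theta \in \mb{R}$ I would introduce the quantity
\[
g_u(\theta) = \sup_{0 < t < 1/n} \big( |u(\theta + t) - u(\theta)| - n t \big),
\]
and note that, by definition of the supremum, the condition defining $D_n$ is exactly the requirement that $g_u(\theta) > 0$ for every $\theta$. The point of this reformulation is structural: for each fixed $t$ the function $\theta \mapsto |u(\theta+t)-u(\theta)| - nt$ is continuous, so $g_u$, being a supremum of continuous functions, is lower semicontinuous; it is also $2\pi$-periodic because $u$ is.

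The crucial step, and the one I expect to be the main obstacle, is to upgrade the pointwise positivity $g_u(\theta) > 0$ to a uniform positive lower bound. The naive argument fails because the witnessing $y = y(\theta)$ varies with $\theta$ and the excess $|u(y)-u(\theta)| - n|y-\theta|$ could in principle degenerate as $\theta$ ranges over $\mb{R}$. Here I would invoke compactness: a lower semicontinuous, $2\pi$-periodic function attains its infimum on the compact set $[0,2\pi]$, so for $u \in D_n$ we get $G_u := \inf_{\theta} g_u(\theta) = \min_{\theta \in [0,2\pi]} g_u(\theta) > 0$. This uniformity is precisely what makes membership in $D_n$ stable under small perturbations and is the heart of the proof.

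Finally I would close the estimate by a routine triangle inequality. Given $u \in D_n$, put $\delta = G_u/8$ and take any $v$ with $\|v-u\|_\infty < \delta$. For each $\theta$, since $g_u(\theta) \ge G_u$, I can choose $y \in (\theta, \theta + \tfrac1n)$ with $|u(y)-u(\theta)| - n(y-\theta) > G_u/2$; then
\[
|v(y)-v(\theta)| - n(y-\theta) \ge |u(y)-u(\theta)| - n(y-\theta) - 2\delta > \frac{G_u}{2} - \frac{G_u}{4} = \frac{G_u}{4} > 0,
\]
so $v \in D_n$. This exhibits a supremum-norm ball around $u$ inside $D_n$, proving $D_n$ open, whence $E_n = \Phi^{-1}(D_n)$ is open as well.
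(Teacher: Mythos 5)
Your proof is correct. Every step checks out: the factorization $E_n = \Phi^{-1}(D_n)$ with $\Phi$ $1$-Lipschitz is valid; the equivalence $u \in D_n \iff g_u > 0$ everywhere is right (a supremum is positive iff some term is); $g_u$ is indeed lower semicontinuous as a supremum of continuous functions and $2\pi$-periodic; an l.s.c.\ function on a compact set attains its infimum, so $G_u > 0$; and the final triangle-inequality estimate with $\delta = G_u/8$ closes the argument. You correctly identified the crux, namely that pointwise positivity of the excess must be upgraded to a uniform bound.

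Your route is genuinely different from the paper's, though both ultimately rest on compactness of $[0,2\pi]$ together with periodicity. The paper proves instead that the complement $\mc{A}(D)\setminus E_n$ is sequentially closed: given $f_m \to f$ uniformly with $f_m \notin E_n$, each $f_m$ carries a witness point $\theta_m$ at which the difference quotients are bounded by $n$ on the whole interval $(\theta_m, \theta_m + \frac1n)$; periodicity places the $\theta_m$ in $[0,2\pi]$, Bolzano--Weierstrass extracts a convergent subsequence $\theta_{k_m} \to \theta$, and uniform convergence lets one pass to the limit in the inequality, producing a witness point for $f$. The paper's argument is slightly more elementary in its toolkit (no semicontinuity needed, only subsequence extraction), but it is purely qualitative and works with the ``closed'' side of the condition, where the bound holds for \emph{all} $y$ in the interval and hence survives limits without any uniformity issue. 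Your argument works with the ``open'' side, where a uniformity issue genuinely arises, and resolves it by exhibiting an explicit modulus of openness $G_u/8$; it also cleanly separates the complex-analytic content (continuity of $f \mapsto \mathrm{Re}\,f|_{\mb{T}}$) from the real-variable content (openness of $D_n$ in $\mc{C}_{\mb{R}}(\mb{T})$), which the paper does implicitly by carrying the $u_m$ alongside the $f_m$. Both proofs are complete and correct.
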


\begin{proof}
Let $n \in \mathbb{N}$. We will prove that $\mc{A}(D) \setminus E_n$ is closed in $\mc{A}(D)$. Let $\{f_m\}$ be a sequence in $\mc{A}(D) \setminus E_n$ and $f \in \mc{A}(D)$ such that $f_m \to f$ uniformly on $\overline{D}$. Since $f_m \notin E_n$, for each m, there is a $\theta_m \in \mb{R}$ such that 
\begin{equation}
\left| \frac{u_m(y) - u_m(\theta_m)}{y-\theta_m} \right| \leq n,
\end{equation}
for every $y \in \left( \theta_m, \theta_m + \frac{1}{n} \right)$, where $u_m = Ref_m|_{\mb{T}}$. Since each $u_m$ is $2\pi-$periodic, we can assume that $\theta_m \in [0,2\pi]$ for every $m$ and hence there is a subsequence $\{ \theta_{k_m} \}$ of $\{ \theta_m \}$ and a $\theta \in [0,2\pi]$ such that $\theta_{k_m} \to \theta$.

\smallskip

If $y \in \left( \theta, \theta + \frac{1}{n} \right)$, then for large enough $m$ it is true that $y \in \left( \theta_{k_m}, \theta_{k_m} + \frac{1}{n} \right)$. Thus, applying (5) for these indices $\{k_m\}$ and then letting $m \to \infty$ we have that
\begin{equation}
\left| \frac{u(y) - u(\theta)}{y-\theta} \right| \leq n,
\end{equation}
(again $u=Ref|_{\mb{T}}$) since the convergence of $\{f_m\}$ to $f$ is uniform. Hence (6) holds for every $y \in \left( \theta, \theta + \frac{1}{n} \right)$ and thus $f \notin E_n$. Thus, $\mc{A}(D) \setminus E_n$ is closed or equivalently $E_n$ is open.
\end{proof}

\begin{lemma}
For every $n \in \mb{N}$, the set $E_n$ is dense in $\mc{A}(D)$.
\end{lemma}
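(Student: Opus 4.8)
The plan is to prove density by a \emph{holomorphic perturbation} argument: given $f \in \mc{A}(D)$ and $\varepsilon > 0$, I will produce $g \in E_n$ with $\|g - f\|_\infty < \varepsilon$ by adding to a polynomial approximant of $f$ a single monomial of very high degree and very small coefficient. The guiding idea is that a function can have arbitrarily small supremum norm yet arbitrarily large slopes, provided it oscillates rapidly; the monomial $\delta z^N$ realizes exactly this while remaining in $\mc{A}(D)$ automatically. This is the key point that lets me bypass the usual difficulty in this circle of ideas, namely that the operator of harmonic conjugation is \emph{not} bounded on $\mc{C}_\mb{R}(\mb{T})$ with the supremum norm, so one cannot simply prescribe a rapidly oscillating boundary real part and hope the resulting function lies in $\mc{A}(D)$.

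First I would reduce to a convenient class of $f$. Since the analytic polynomials are dense in $\mc{A}(D)$, I may fix a polynomial $p$ with $\|p - f\|_\infty < \varepsilon/2$; then $u := Re\,p|_\mb{T}$ is a trigonometric polynomial, hence of class $C^1$, and I set $M := \|u'\|_\infty < \infty$. Now fix $\delta \in (0, \varepsilon/2)$ and an integer $N$ (to be chosen), and define
\[
g := p + \delta z^N \in \mc{A}(D).
\]
Because $|z^N| \le 1$ on $\overline{D}$ with equality on $\mb{T}$, we have $\|g - p\|_\infty = \delta$, so $\|g - f\|_\infty < \varepsilon$. Moreover the boundary real part is $u_g(\theta) = Re\,g(e^{i\theta}) = u(\theta) + \delta\cos(N\theta)$, so the rapidly oscillating term $\delta\cos(N\theta)$ has been added \emph{inside} $\mc{A}(D)$, with no conjugate to control.

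It remains to verify $g \in E_n$, i.e.\ $u_g \in D_n$. Fix $\theta \in \mb{R}$ and write $t = N\theta$. The main thing to check --- and the step I would be most careful about, since it must hold \emph{uniformly} in $\theta$ --- is that the cosine always oscillates by a definite amount nearby: in any interval $(t, t + 2\pi]$ the function $\cos$ attains both the value $+1$ and the value $-1$, so one can choose an extremum $s$ there whose value has sign opposite to $\cos t$, giving $|\cos s - \cos t| \ge 1$. Setting $y = \theta + (s - t)/N$ gives $0 < y - \theta \le 2\pi/N$, whence
\[
|u_g(y) - u_g(\theta)| \ge \delta\,|\cos s - \cos t| - |u(y) - u(\theta)| \ge \delta - M(y - \theta).
\]
If now $N$ is chosen large enough that $2\pi/N < 1/n$ and $2\pi(M + n)/N < \delta$, then $y \in \left(\theta, \theta + \frac{1}{n}\right)$ and $(M + n)(y - \theta) < \delta$, so the right-hand side exceeds $n(y - \theta) = n|y-\theta|$. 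Thus $u_g$ satisfies the defining inequality of $D_n$ at every $\theta$, which proves $g \in E_n$ and completes the density argument.

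I expect no serious obstacle beyond making the last estimate uniform in $\theta$, which is handled by the universal lower bound $|\cos s - \cos t| \ge 1$ and the fact that all smallness conditions on the perturbation are absorbed by a single large choice of $N$. One could instead follow the route indicated in the introduction, approximating $u$ by a piecewise-linear function and then by one of large slope; there the genuine difficulty would be to control the harmonic conjugates so that the perturbed function stays in $\mc{A}(D)$, and it is precisely this difficulty that the monomial perturbation avoids.
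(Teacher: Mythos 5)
Your proof is correct, and it takes a genuinely different --- and considerably more economical --- route than the paper's. The paper approximates $Ref|_{\mb{T}}$ by a piecewise linear function (Lemma 2.4), controls the harmonic conjugate of the error by combining Cauchy--Schwarz with Parseval's identity applied to the derivative, and then adds a real sawtooth wave of large slope whose conjugate is tamed by an explicit computation of its Fourier coefficients (Lemmas 2.5 and 2.6); all of that machinery exists precisely to cope with the unboundedness of harmonic conjugation on $(\mc{C}_{\mb{R}}(\mb{T}), \|\cdot\|_{\infty})$, which you correctly identify as the crux. Your perturbation $\delta z^N$ sidesteps the issue entirely: it already lies in $\mc{A}(D)$ with $\|\delta z^N\|_{\infty} = \delta$ independently of $N$, while its boundary real part $\delta\cos(N\theta)$ oscillates by at least $\delta$ on every interval of length $2\pi/N$. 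The verification that $u + \delta\cos(N\cdot) \in D_n$ is sound: the uniform bound $|\cos s - \cos t| \ge 1$ for an extremum $s \in (t, t+2\pi]$ chosen with sign opposite to $\cos t$, the mean value estimate $|u(y)-u(\theta)| \le M(y-\theta)$, and the order of quantifiers (fix $\delta \in (0,\varepsilon/2)$, then take $N$ with $2\pi/N < 1/n$ and $2\pi(M+n)/N < \delta$) all fit together, and the point $y = \theta + (s-t)/N$ does land in the open interval $\left(\theta, \theta + \frac{1}{n}\right)$ because $2\pi/N < 1/n$ strictly. What the paper's longer route buys is a pair of quantitative conjugate-function estimates ((12) and (22)) of some independent interest, together with explicit control of the piecewise-linear structure of the perturbed boundary values; what yours buys is brevity and the complete elimination of Lemmas 2.4--2.6 from the proof of density.
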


\noindent For the proof of Lemma 2.3 we will need another lemma:

\begin{lemma}
Let $u: \mb{R} \to \mb{R}$ a $2\pi-$periodic function of class $C^1$ and an $\varepsilon >0$. Then there exists a piecewise linear $2\pi-$periodic continuous function $u_0 : \mb{R} \to \mb{R}$ such that $\| u-u_0 \|_{\infty} < \varepsilon$ and if $u_0$ is linear at each interval defined by the partition $\{ 0 = t_0 < t_1 < ... < t_N = 2\pi \}$ then
\begin{equation}
\max_{1 \leq j \leq N} \sup \{ |u'(x) - u_0'(x)| : t_{j-1} < x < t_j \} < \varepsilon.
\end{equation}
\end{lemma}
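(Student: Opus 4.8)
The plan is to take for $u_0$ the piecewise-linear interpolant of $u$ at the nodes of a sufficiently fine partition of $[0,2\pi]$, the mesh being controlled by the uniform continuity of the derivative. Since $u$ is $2\pi$-periodic and of class $C^1$, its derivative $u'$ is continuous and $2\pi$-periodic, hence uniformly continuous on all of $\mb{R}$; so I can fix a $\delta > 0$ with the property that $|u'(x) - u'(y)| < \varepsilon$ whenever $|x-y| < \delta$. This is the reservoir from which both estimates will be drawn.

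Next I would choose a partition $0 = t_0 < t_1 < \cdots < t_N = 2\pi$ whose mesh $\max_j (t_j - t_{j-1})$ is smaller than $\min\{\delta, 1\}$, and define $u_0$ to be the unique continuous function that is affine on each $[t_{j-1}, t_j]$ and satisfies $u_0(t_j) = u(t_j)$ for all $j$. On $(t_{j-1},t_j)$ the function $u_0$ then has constant slope $m_j = \frac{u(t_j)-u(t_{j-1})}{t_j - t_{j-1}}$. Because $u(0) = u(2\pi)$ by periodicity, we have $u_0(0) = u_0(2\pi)$, so the $2\pi$-periodic extension of $u_0$ is well defined and continuous, exactly as required by the statement.

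The heart of the matter is the derivative estimate (8), and here the Mean Value Theorem does the work: for each $j$ there is a point $\xi_j \in (t_{j-1}, t_j)$ with $m_j = u'(\xi_j)$. Hence for every $x \in (t_{j-1}, t_j)$ we have $u_0'(x) = m_j = u'(\xi_j)$ and $|x - \xi_j| < \delta$, so $|u'(x) - u_0'(x)| = |u'(x) - u'(\xi_j)| < \varepsilon$ by the choice of $\delta$; taking the supremum over $x$ and the maximum over $j$ yields (8). The sup-norm bound then comes for free from the same estimate: writing $g = u - u_0$, the function $g$ vanishes at the two endpoints of each subinterval and satisfies $|g'(x)| = |u'(x) - m_j| < \varepsilon$ there, so for $x \in [t_{j-1}, t_j]$ integration gives $|g(x)| = \left| \int_{t_{j-1}}^x g'(t)\,dt \right| \le \varepsilon (x - t_{j-1}) < \varepsilon$, since the mesh is less than $1$. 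Therefore $\|u - u_0\|_\infty < \varepsilon$ and both conclusions hold.

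The only delicate point is to satisfy the two requirements \emph{simultaneously} with a single partition, and this is precisely what forces me to refine the mesh against the uniform continuity of $u'$ rather than merely that of $u$. Once the mesh controls the oscillation of $u'$, the Mean Value Theorem converts the nodal interpolation of $u$ into uniform control of the slopes $m_j$, and the sup-norm bound follows automatically from the vanishing of $u - u_0$ at the nodes. The corners of $u_0$ at the points $t_j$ cause no difficulty, since (8) concerns only the open intervals of the partition.
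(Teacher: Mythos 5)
Your proposal is correct and follows essentially the same construction as the paper: the piecewise linear interpolant of $u$ on a partition fine enough to control the oscillation of $u'$, with the Mean Value Theorem converting the nodal interpolation into the slope estimate. The only cosmetic difference is that the paper bounds $\|u-u_0\|_\infty$ directly from the uniform continuity of $u$ itself (together with the monotonicity of $u_0$ on each subinterval), whereas you integrate the derivative bound, which is why you need the additional requirement that the mesh be less than $1$.
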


\begin{proof}
Since both $u$ and $u'$ are continuous and periodic they are also uniformly continuous and thus there is a large $N>0$ such that if $x,y \in \mb{R}$ with $|x-y| \leq \frac{2\pi}{N}$, then
\begin{equation}
|u(x)-u(y)|< \frac{\varepsilon}{2} \ \ \ \mbox{and} \ \ \ |u'(x)-u'(y)|< \frac{\varepsilon}{2}.
\end{equation}
For $0 \leq j \leq N$ let $t_j$ be the points $\frac{2\pi j}{N}$ and $u_0 : [0,2\pi] \to \mb{R}$ the function which in each interval $[t_{j},t_{j+1}]$ connects linearly the points $(t_j, u(t_j))$ and $(t_{j+1},u(t_{j+1}))$. Since $u$ is $2\pi-$periodic, $u(0)=u(2\pi)$ and hence $u_0(0) = u_0(2\pi)$. Thus we can extend $u_0$ to a $2\pi-$periodic continuous function defined on $\mb{R}$. We will prove that $u_0$ fullfils the requirements of the lemma. From the periodicity of both $u$ and $u_0$ it is enough to examine the desired properties on $[0,2\pi]$.

\smallskip

Let $x \in [0,2\pi]$ and $1 \leq j \leq N$ such that $t_{j-1} \leq x \leq t_{j}$. Then $|x-t_{j-1}| \leq \frac{2\pi}{N}$ and thus $|u(x) - u(t_{j-1})| < \varepsilon/2$. But since $u(t_{j-1}) = u_0(t_{j-1})$ and $u_0$ is linear in the interval $[t_{j-1}, t_j]$ we deduce that
\begin{equation}
\begin{split}
|u(x) - u_0(x)| & \leq |u(x) - u(t_{j-1})| + |u_0(t_{j-1}) - u_0(x)| \\ & < \frac{\varepsilon}{2} + |u_0(t_{j-1}) - u_0(t_j) | \\ & < \frac{\varepsilon}{2} + \frac{\varepsilon}{2} = \varepsilon.
\end{split}
\end{equation}
Hence, indeed $\|u-u_0\|_{\infty} < \varepsilon$.

\smallskip

For the proof of (7), let $1 \leq j \leq N$ and $x \in (t_{j-1},t_j)$. Then,
\begin{equation}
u_0'(x) = \frac{u(t_j)-u(t_{j-1})}{t_j - t_{j-1}}
\end{equation}
and thus, there exists a $\xi \in (t_{j-1},t_j)$ such that $u_0'(x) = u'(\xi)$. Since $|x-\xi| < t_j - t_{j-1} = \frac{2\pi}{N}$, using (8) we deduce that $| u'(x) - u_0'(x) | < \varepsilon/2$ and (7) follows.
\end{proof}

\noindent {\it Proof of Lemma 2.3.} Let $f \in \mc{A}(D)$ and an $\varepsilon >0$. It is well known, that the set of polynomials is dense in $\mc{A}(D)$; thus, to prove the density of $E_n$, we may assume that $f$ is a complex polynomial. First, we approximate $f$ by a function $g \in \mc{A}(D)$ such that $Reg|_{\mb{T}}$ is piecewise linear. For this puprose we will use Lemma 2.4. Indeed, if $u=Ref|_\mb{T}$, then $u$ is of course of class $C^{\infty}$ and thus, from the above lemma there exists a piecewise linear function $u_0 \in \mc{C}_{\mb{R}}(\mb{T})$ such that 
\begin{equation}
\|u-u_0\|_{\infty} < \varepsilon \ \ \ \mbox{and} \ \ \ |u'(\theta)-u_0'(\theta)| < \varepsilon 
\end{equation}
for every $\theta$ in which $u_0$ is differentiable. Consider the function $U=u-u_0$. Using the Cuachy-Schwarz inequality, Parseval's identity (for the piecewise $C^1$ function $U$) and (11) we obtain that
\begin{equation}
\begin{split}
\sum_{k \in \mb{Z}} |\widehat{U}(k)| & \leq |\widehat{U}(0)| + \left( \sum_{k \in \mb{Z}\setminus \{0\}} \frac{1}{k^2} \right)^{1/2} \left( \sum_{k \in \mb{Z} \setminus \{0\}} \left| k \cdot \widehat{U}(k) \right|^2 \right)^{1/2} \\ &  < \varepsilon + \frac{\pi}{\sqrt{6}} \left( \sum_{k \in \mb{Z}\setminus\{0\}} \left| \widehat{U'}(k) \right|^2 \right)^{1/2} \\ & = \varepsilon + \frac{\pi}{\sqrt{6}} \left( \frac{1}{2\pi} \int_0^{2\pi} \left|U'(t)\right|^2 dt \right)^{1/2} \\ & < \varepsilon + \frac{\pi}{\sqrt{6}} \cdot \varepsilon = C_1 \cdot \varepsilon,
\end{split}
\end{equation}
for the constant $C_1 =1 + \frac{\pi}{\sqrt{6}} >0$. It is well known (see \cite{ahl} pp. 168-171) that $U$ can be extended continuously on $\overline{D}$ such that the restriction $U|_D$ is harmonic. Thus U has a harmonic conjugate $V$ on $D$, obtained by the mappings
\begin{equation}
r^k\cos(kx) \mapsto r^k\sin(kx) \ \ \ \mbox{and} \ \ \ r^k\sin(kx) \mapsto -r^k\cos(kx),
\end{equation}
where $0 \leq r < 1$, $x \in \mb{R}$ and $z=re^{ix} \in D$, for $k \in \mb{Z}$. The convergence of the series in (12) yields that $V$ can also extend continuously on $\overline{D}$: indeed, since from the above mapping we have $\widehat{V}(k) = -i \cdot \mbox{sign}(k) \cdot \widehat{U}(k)$, for $k \in \mb{Z} \setminus \{0\}$ and $\widehat{V}(0)=0$ we conclude that the series
\begin{equation}
\sum_{k \in \mb{Z}} \left| \widehat{V}(k) \right| < C_1 \cdot \varepsilon < \infty,
\end{equation}
and thus, the Fourier series of $V$ converges to $V$ uniformly on $\mb{T}$. Therefore, if $\tilde{u}= Imf|_\mb{T}$, the harmonic function $\tilde{u}-V$ extends continuously on $\overline{D}$ and is a harmonic conjugate of $u-U = u_0$. Hence, there exists a $g \in \mc{A}(D)$ such that $Reg|_\mb{T} = u_0$ and $Img|_\mb{T} = \tilde{u}_0 = \tilde{u}-V $. From the above inequalities and the Maximum Principle we deduce that:
\begin{equation}
\begin{split}
\|f-g\|_{\infty} & = \max\{ |f(\zeta)-g(\zeta)| : \zeta \in \mb{T} \} \\ &\leq \|u-u_0\|_{\infty} + \| \tilde{u}-\tilde{u}_0 \|_{\infty} = \|U\|_{\infty} + \|V\|_{\infty} \\ & \leq \sum_{k \in \mb{Z}} \left| \widehat{U}(k) \right| + \sum_{k \in \mb{Z}} \left| \widehat{V}(k) \right| < 2C_1 \cdot \varepsilon.
\end{split}
\end{equation}

\smallskip

We will now find a function $h \in E_n$ such that $\|g-h\|_{\infty} < K \cdot \varepsilon$  for some constant $K$ and the triangle inequality will imply the density of $E_n$ in $\mc{A}(D)$. We define
\begin{equation}
\ell_j = \frac{u_0(t_j)-u_0(t_{j-1})}{t_j-t_{j-1}}, \ \ \ \ j=1,2,...,N
\end{equation}
the slopes of the linear components of $u_0$ and consider a large $R \in \mb{N}$ such that the real number
\begin{equation}
m = \frac{2R\varepsilon}{\pi} > n + \max_{1 \leq j \leq N} |\ell_j|.
\end{equation}
Afterwards, we consider the continuous, $\frac{\pi}{R}-$periodic function $s: \mb{R} \to \mb{R}$ defined by
\begin{equation}
s(\theta) = \varepsilon \cdot \mbox{dist}\left( \theta , \frac{\pi}{R} \cdot \mb{Z} \right),
\end{equation}
where $\frac{\pi}{R} \cdot \mb{Z} = \{ ..., -\frac{\pi}{R}, 0, \frac{\pi}{R}, ... \}$. We can easily see that $s$ is piecewise linear with slopes $\pm m$. 

\smallskip

Of course, $s$ is also $2\pi-$periodic and thus we can consider the function $u_1 = u_0 + s \in \mc{C}_{\mb{R}}(\mb{T})$, which satisfies 
\begin{equation}
\|u_0-u_1\|_{\infty} = \|s\|_{\infty} = \frac{\pi}{2R} \cdot \varepsilon < 2\varepsilon.
\end{equation}
From $u_1$ we will get the function $h$ we are looking for. We will need two lemmas:

\begin{lemma}
The function $u_1$ belongs in the class $D_n$.
\end{lemma}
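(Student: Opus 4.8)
The plan is to exploit the fact that, on every linear piece, the slope of $u_1 = u_0 + s$ is so steep that the difference quotient in the definition (1) of $D_n$ exceeds $n$ automatically. Away from the (discrete) set of breakpoints — the partition points $t_j$ of $u_0$ together with the points of $\tfrac{\pi}{2R}\mb{Z}$ at which $s$ changes slope — the function $u_1$ is affine with slope $\ell_j \pm m$, where $\ell_j$ is the relevant slope of $u_0$ from (16) and $\pm m$ is the slope of $s$. The decisive estimate is
\[
|\ell_j \pm m| \ \ge \ m - |\ell_j| \ \ge \ m - \max_{1 \le j \le N} |\ell_j| \ > \ n,
\]
which is precisely where the choice of $m$ in (17) enters; note that it makes the slope of $u_1$ large in absolute value regardless of the sign of the slope of $s$.

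Given this, I would argue as follows. Fix $\theta \in \mb{R}$. Since the breakpoints of $u_0$ and of $s$ form a discrete set, there is a $\delta > 0$, which may be taken smaller than $\frac{1}{n}$, such that the open interval $(\theta, \theta + \delta)$ contains no breakpoint of either $u_0$ or $s$; hence $u_1$ is affine on $[\theta, \theta + \delta]$ with slope $\ell_j \pm m$ for the appropriate index $j$ and sign. Choosing any $y \in (\theta, \theta + \delta) \subseteq (\theta, \theta + \frac{1}{n})$ we obtain
\[
|u_1(y) - u_1(\theta)| \ = \ |\ell_j \pm m|\,(y - \theta) \ > \ n\,(y - \theta) \ = \ n\,|y - \theta|,
\]
which is exactly the condition defining $D_n$. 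Since $\theta$ was arbitrary, this gives $u_1 \in D_n$.

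There is no deep obstacle here: the whole content sits in the slope estimate and in the choice (17) of $m$. The one point needing a little care is the behaviour at a $\theta$ lying on, or arbitrarily close to, a breakpoint — one must not appeal to a two-sided derivative but instead pick $y$ inside a single affine piece immediately to the right of $\theta$, which is always possible because the breakpoints are isolated. I would also remark that the smallness of the period $\frac{\pi}{R}$ of $s$ plays no role in this lemma; it will be needed later only to keep the harmonic conjugate of $s$ small and thereby control $\|g - h\|_{\infty}$, so for the present statement I would deliberately refrain from invoking it.
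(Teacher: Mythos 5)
Your proof is correct and follows essentially the same route as the paper: pick $\delta\in(0,\tfrac1n)$ so that $u_0$ and $s$ are both affine on $(\theta,\theta+\delta)$, and use the choice of $m$ in (17) to see that the slope of $u_1$ there has absolute value at least $m-\max_j|\ell_j|>n$ (the paper phrases this via the reverse triangle inequality $|u_1(y)-u_1(\theta)|\ge|s(y)-s(\theta)|-|u_0(y)-u_0(\theta)|$, which is the same estimate). Your added remarks — that one must take $y$ in a one-sided affine piece rather than invoke a derivative at $\theta$, and that the smallness of the period of $s$ is irrelevant here — are accurate but do not change the argument.
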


\begin{proof}
Let $\theta \in \mb{R}$ and some $\delta \in \left( 0, \frac{1}{n} \right)$ such that both $s$ and $u_0$ have constant derivatives in the interval $(\theta, \theta + \delta)$. Then, for $y$ in this interval it is true that $\theta < y < \theta + \frac{1}{n}$ and in addition:
\begin{equation}
|u_0(y)-u_0(\theta)| = | \ell_j | \cdot |y-\theta|,
\end{equation}
for some $1 \leq j \leq N$. Using (17), we deduce that
\begin{equation}
\begin{split}
|u_1(y)-u_1(\theta)| & \geq |s(y)-s(\theta)| - |u_0(y)-u_0(\theta)| \\ & = m \cdot |y-\theta| - |\ell_j | \cdot |y-\theta| \\ & > n \cdot |y-\theta|,
\end{split}
\end{equation}
i.e. $u_1 \in D_n$.
\end{proof}

\begin{lemma}
The above function $s$ satisfies the inequality
\begin{equation}
\sum_{k \in \mb{Z}} \left| \widehat{s}(k) \right| < L \cdot \varepsilon,
\end{equation}
for some absolute constant $L>0$.
\end{lemma}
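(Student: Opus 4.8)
The plan is to reuse the Cauchy--Schwarz and Parseval device of (12), but now the decisive extra ingredient is the small period $\tfrac{\pi}{R}$ of $s$. The function $s$ is continuous and piecewise linear, so $s'$ is the bounded, piecewise constant function with $|s'|=m$ almost everywhere, and $\widehat{s'}(k)=ik\,\widehat{s}(k)$ for all $k$. The key observation is that, since $s(\theta+\tfrac{\pi}{R})=s(\theta)$, substituting $\theta\mapsto\theta+\tfrac{\pi}{R}$ in the integral defining $\widehat{s}(k)$ yields $\widehat{s}(k)=e^{-ik\pi/R}\widehat{s}(k)$; hence $\widehat{s}(k)=0$ whenever $k$ is not a multiple of $2R$. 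Thus all the Fourier mass of $s$ is concentrated on the sparse set $2R\mathbb{Z}$, and it is exactly this sparsity that will neutralize the growth of $m$ with $R$.

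First I would isolate the constant term, bounding $|\widehat{s}(0)|\le\|s\|_{\infty}<2\varepsilon$ via (20). For $k\neq0$ I would write $k=2Rn$ and combine $\widehat{s'}(2Rn)=2Rn\,i\,\widehat{s}(2Rn)$ with the Cauchy--Schwarz inequality:
\[
\sum_{n\neq0}\big|\widehat{s}(2Rn)\big|\le\Big(\sum_{n\neq0}\tfrac{1}{(2Rn)^2}\Big)^{1/2}\Big(\sum_{n\neq0}\big|\widehat{s'}(2Rn)\big|^2\Big)^{1/2}.
\]
The first factor is $\tfrac{1}{2R}\big(\sum_{n\neq0}n^{-2}\big)^{1/2}=\tfrac{\pi}{2R\sqrt{3}}$, while Parseval's identity bounds the second by $\big(\tfrac{1}{2\pi}\int_0^{2\pi}|s'(t)|^2\,dt\big)^{1/2}=m$. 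With $m=\tfrac{2R\varepsilon}{\pi}$ from (18), their product collapses to $\tfrac{\pi}{2R\sqrt{3}}\cdot\tfrac{2R\varepsilon}{\pi}=\tfrac{\varepsilon}{\sqrt{3}}$, independently of $R$. Adding the constant term gives $\sum_{k\in\mathbb{Z}}|\widehat{s}(k)|<\big(2+\tfrac{1}{\sqrt{3}}\big)\varepsilon$, so the lemma holds with $L=2+\tfrac{1}{\sqrt{3}}$.

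The one place where care is essential is the growth $m\sim R$: a Cauchy--Schwarz estimate applied naively over all nonzero frequencies (using $\sum_{k\neq0}k^{-2}=\tfrac{\pi^2}{3}$) would produce only the useless bound $\tfrac{\pi}{\sqrt{3}}m\sim R\varepsilon$. Restricting the sum to the support $2R\mathbb{Z}$ is precisely what furnishes the compensating factor $\tfrac{1}{2R}$, and this is the heart of the matter. As an alternative one may compute directly: writing $s(\theta)=\tfrac{\pi m}{R}\,\tau\big(\tfrac{R}{\pi}\theta\big)$ with $\tau(x)=\operatorname{dist}(x,\mathbb{Z})$ the unit triangular wave, the dilation $\theta\mapsto\tfrac{R}{\pi}\theta$ only relabels the frequency $n$ as $2Rn$ and leaves the $\ell^1$ norm of the Fourier coefficients unchanged, so that $\sum_{k}|\widehat{s}(k)|=\tfrac{\pi m}{R}\sum_{n}|\widehat{\tau}(n)|$; since the coefficients of $\tau$ decay like $n^{-2}$ the remaining sum is an absolute constant, again giving a bound of the form $L\varepsilon$.
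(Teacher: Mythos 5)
Your argument is correct, but it reaches the estimate by a genuinely different route from the paper's. Both proofs pivot on the same key observation, which you state and prove exactly as the paper does: the substitution $\theta \mapsto \theta + \pi/R$ gives $\widehat{s}(k) = e^{-ik\pi/R}\,\widehat{s}(k)$, so the Fourier support of $s$ lies in $2R\mathbb{Z}$. From there the paths diverge. The paper computes the surviving coefficients explicitly: the change of variables $y = 2R\theta$ reduces $\widehat{s}(2\lambda R)$ to a Fourier coefficient of the single tent $\frac{m}{2R}|y|$ on $[-\pi,\pi]$, yielding $\widehat{s}(2\lambda R) = \frac{\varepsilon}{\pi}\cdot\frac{(-1)^{\lambda}-1}{\pi\lambda^{2}}$, and then sums the resulting $O(\varepsilon/\lambda^{2})$ terms. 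You instead recycle the Cauchy--Schwarz/Parseval device of (12), applied only over the support $2R\mathbb{Z}$, so that the factor $\bigl(\sum_{n\neq 0}(2Rn)^{-2}\bigr)^{1/2} = \frac{\pi}{2R\sqrt{3}}$ exactly cancels the growth $m = \frac{2R\varepsilon}{\pi}$ coming from $\|s'\|_{L^{2}} = m$. Your version is shorter, avoids any integral evaluation, and makes transparent the mechanism (sparse spectrum compensating large slope); the paper's computation costs more but returns the pointwise decay $|\widehat{s}(2\lambda R)| \lesssim \varepsilon/\lambda^{2}$ as extra information. Your closing ``alternative'' via the unit triangular wave $\tau$ is in substance the paper's own computation in rescaled form. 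Two bookkeeping points: the identity $\widehat{s'}(k) = ik\,\widehat{s}(k)$ for the continuous, piecewise linear, periodic $s$ deserves the same one-line integration-by-parts justification that the paper tacitly uses for $U$ in (12); and note that both you and the paper work with $s$ having slopes $\pm m$ (equivalently $s(\theta) = m\cdot\mathrm{dist}(\theta, \frac{\pi}{R}\mathbb{Z})$), which is what Lemma 2.5 requires, rather than the slopes $\pm\varepsilon$ that the literal formula (18) would give --- your bounds $|s'| = m$ a.e.\ and $|\widehat{s}(0)| \le \|s\|_{\infty} < 2\varepsilon$ are consistent with that reading.
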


\begin{proof}
We will compute the fourier coefficients $\widehat{s}(k)$: first of all, for $k \in \mb{Z}$ we observe that:
\begin{equation}
\begin{split}
\widehat{s}(k) & = \frac{1}{2\pi} \int_0^{2\pi} s(\theta) e^{-ik\theta} d\theta \\ & = \frac{1}{2\pi} \int_0^{2\pi} s\left( \theta + \frac{\pi}{R} \right) e^{-ik \left( \theta + \frac{\pi}{R} \right) } d\theta \\ & =e^{-\frac{ik\pi}{R}} \cdot \frac{1}{2\pi} \int_0^{2\pi} s(\theta) e^{-ik\theta} d\theta \\ & = e^{-\frac{ik\pi}{R}} \widehat{s}(k),
\end{split}
\end{equation}
where we have used the fact that $s$ is $\frac{\pi}{R}-$periodic. Thus, $\widehat{s}(k) \neq 0$ implies that $k=2\lambda R$, for some $\lambda \in \mb{Z}$. We now compute these coefficients: for $\lambda \in \mb{Z}\setminus \{0\}$
\begin{equation}
\begin{split}
\widehat{s}(2\lambda R) & = \frac{1}{2\pi} \int_{-\pi}^{\pi} s(\theta) e^{-2i\lambda R\theta} d\theta \\ & = \frac{1}{2\pi} \sum_{j=-R}^{R-1} \int_{j\pi/R}^{(j+1)\pi/R} s(\theta) e^{-2i\lambda R\theta} d\theta \\ & \stackrel{y=2R\theta}{=} \frac{1}{4\pi R} \sum_{j=-R}^{R-1} \int_{2j\pi}^{2(j+1)\pi} s\left( \frac{y}{2R} \right) e^{-i\lambda y} dy.
\end{split}
\end{equation}
Since the function inside the integral is $2\pi-$periodic we also have the equalities:
\begin{equation}
\begin{split}
\widehat{s}(2\lambda R) & = \frac{1}{4\pi R} \sum_{j=-R}^{R-1} \int_{-\pi}^{\pi} s \left( \frac{y}{2R} \right) e^{-i\lambda y} dy \\ & = \frac{1}{2\pi} \int_{-\pi}^\pi s \left( \frac{y}{2R} \right) e^{-i\lambda y} dy.
\end{split}
\end{equation}
But, for $-\pi \leq y \leq \pi$ it holds $-\frac{\pi}{2R} \leq \frac{y}{2R} \leq \frac{\pi}{2R}$ and thus, $s \left( \frac{y}{2R} \right) = \frac{m}{2R} \cdot |y|$. Thus, the computation gives:
\begin{equation}
\begin{split}
\widehat{s}(2\lambda R) & = \frac{1}{2\pi} \int_{-\pi}^\pi \frac{m}{2R} |y| e^{-i\lambda y} dy \\ & \stackrel{(16)}{=} \frac{\varepsilon}{2\pi^2} \int_{-\pi}^\pi |y| e^{-i\lambda y} dy \\ & = \frac{\varepsilon}{\pi} \cdot \frac{(-1)^\lambda-1}{\pi \lambda^2}.
\end{split}
\end{equation}
Thus,
\begin{equation}
\left| \widehat{s}(2\lambda R) \right| \leq \frac{C_2}{\lambda ^2} \cdot \varepsilon,
\end{equation}
for some absolute constant $C_2 >0$ and the result of the lemma follows, since $\widehat{s}(0) = \frac{\varepsilon \pi}{4R} < \varepsilon$ and hence:
\begin{equation}
\begin{split}
\sum_{k \in \mb{Z}} \left| \widehat{s}(k) \right| & = \sum_{\lambda \in \mb{Z}} \left| \widehat{s}(2\lambda R) \right| \\ & < \varepsilon + \sum_{t \in \mb{Z}\setminus\{0\}} \frac{C_2}{t^2} \cdot \varepsilon = L \cdot \varepsilon,
\end{split}
\end{equation}
for some constant $L>0$.
\end{proof}

\noindent {\it Proof of Lemma 2.3 (continued).} From (22), as we did after (12), we deduce that $s$ has a harmonic conjugate $\tilde{s}$ on $D$, which extends continuously on $\overline{D}$ and in addition satisfies the property $\| \tilde{s} \|_{\infty} < L \cdot \varepsilon$. We now define the function $h : \overline{D} \to \mb{C}$ by $h(z) = g(z) + (s(z) + i \tilde{s}(z))$, $|z| \leq 1$. From the above results, we have that $h \in \mc{A}(D)$, $Reh = u_0 + s = u_1 \in D_n$ and
\begin{equation}
\begin{split}
\|g-h\|_{\infty} & \leq \|u_1-u_0\|_{\infty} + \|\tilde{u}_1 - \tilde{u}_0 \|_{\infty} \\ & = \|s\|_{\infty} + \|\tilde{s}\|_{\infty} < K \cdot \varepsilon,
\end{split}
\end{equation}
for some constant $K>0$. Thus,
\begin{equation}
\|f-h\|_{\infty} \leq \|f-g\|_{\infty} + \|g-h\|_{\infty} < (2C_1+K) \cdot \varepsilon,
\end{equation}
and since $h \in E_n$ the proof of the lemma is complete.

\hfill$\Box$

\noindent {\it Proof of Theorem 1.2.} Since every $E_n$ is open and dense in $\mc{A}(D)$, from Baire's Category Theorem, we also deduce that their intersection $\mc{S}$ is $G_{\delta}$ and dense in $\mc{A}(D)$. Thus, from Lemma 2.1, Theorem 1.2 follows.

\hfill$\Box$

We close with a stronger version of Theorem 1.2:

\begin{corollary}
Let $E_1$ be the class of all functions $f \in \mc{A}(D)$ such that neither $u=Ref|_\mb{T}$, nor $\tilde{u} = Imf|_\mb{T}$ are differentiable at any point $\theta \in \mb{R}$. Then $E_1$ is residual in $\mc{A}(D)$.
\end{corollary}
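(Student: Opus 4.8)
The plan is to deduce the corollary directly from Theorem 1.2 by exploiting the rotational symmetry of $\mc{A}(D)$ under multiplication by $i$. The crucial observation is that the map $\Phi : \mc{A}(D) \to \mc{A}(D)$ defined by $\Phi(f) = if$ is a linear surjective isometry, since $\|if\|_\infty = \|f\|_\infty$, and is therefore a homeomorphism of $\mc{A}(D)$ onto itself. Moreover, writing $f|_\mb{T} = u + i\tilde{u}$ with $u = \mathrm{Re}f|_\mb{T}$ and $\tilde{u} = \mathrm{Im}f|_\mb{T}$, we have $\mathrm{Re}(if)|_\mb{T} = -\tilde{u}$. Since negation does not affect differentiability, $\tilde{u}$ is nowhere differentiable if and only if $\mathrm{Re}(if)|_\mb{T}$ is nowhere differentiable, that is, if and only if $\Phi(f) = if \in E$.

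From this remark I would first record the set-theoretic description of $E_1$. By definition $f \in E_1$ precisely when both $u$ and $\tilde{u}$ are nowhere differentiable; by Theorem 1.2 the first condition means $f \in E$, and by the observation above the second condition means $if \in E$, i.e. $f \in \Phi^{-1}(E)$. Hence
\[
E_1 = E \cap \Phi^{-1}(E).
\]

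Next I would invoke the structure produced in the proof of Theorem 1.2, where the dense $G_\delta$ set $\mc{S} = \bigcap_{n=1}^\infty E_n \subseteq E$ was constructed from the open dense sets $E_n$. Since $\Phi$ is a homeomorphism, each $\Phi^{-1}(E_n)$ is again open and dense, so $\Phi^{-1}(\mc{S}) = \bigcap_{n=1}^\infty \Phi^{-1}(E_n)$ is a dense $G_\delta$ subset of $\mc{A}(D)$ contained in $\Phi^{-1}(E)$. Consequently
\[
\mc{S} \cap \Phi^{-1}(\mc{S}) = \bigcap_{n=1}^\infty \big( E_n \cap \Phi^{-1}(E_n) \big)
\]
is a countable intersection of open dense sets. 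As $\mc{A}(D)$ is a complete metric space, Baire's Category Theorem guarantees that this intersection is dense and $G_\delta$, and it is contained in $E \cap \Phi^{-1}(E) = E_1$. Therefore $E_1$ contains a dense $G_\delta$ subset and is residual, completing the proof.

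There is essentially no analytic obstacle in this argument: all the work has already been carried out in Theorem 1.2, and the corollary follows with no new estimates. The only point that requires a word of care is verifying that $\Phi$ genuinely preserves both density and the $G_\delta$ property, which is immediate once one observes that $\Phi$ is an isometric bijection of $\mc{A}(D)$; the entire strengthening then rests on the elementary identity $\mathrm{Re}(if)|_\mb{T} = -\mathrm{Im}f|_\mb{T}$ together with the stability of residual sets under finite (indeed countable) intersection.
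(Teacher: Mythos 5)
Your proposal is correct and follows essentially the same route as the paper: the paper likewise forms the intersection of the dense $G_\delta$ set $\mc{S}$ with its image under the homeomorphism given by multiplication by $i$ (your $\Phi^{-1}(\mc{S})$ is just $-i\mc{S}$) and uses the identity relating $\mathrm{Re}(\pm if)$ to $\mathrm{Im}f$ together with Baire's theorem. The only cosmetic difference is that you unwind the intersection into the open dense sets $E_n \cap \Phi^{-1}(E_n)$, whereas the paper intersects the two dense $G_\delta$ sets directly.
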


\begin{proof}
Since the set $\mc{S}$ defined above is $G_{\delta}$ and dense in $\mc{A}(D)$ and multiplication by $i$ is a homeomorphism, the same holds for the set $i\mc{S}$. Thus, from Baire's Theorem the set $\mc{T} = \mc{S} \cap i \mc{S}$ is also $G_{\delta}$ and dense in $\mc{A}(D)$. We will prove that $\mc{T} \subseteq E_1$.

\smallskip

Let $f \in \mc{T}$. Since $f \in \mc{S}$, it is true that $u=Ref$, is nowhere differentiable from Lemma 2.1. But also, $f \in i\mc{S}$ and thus $-if \in \mc{S}$. Hence, we also conclude that $Re(-if) = Imf = \tilde{u}$ is nowhere differentiable.
\end{proof}

\vspace{3mm}

\noindent {\bf Aknowledgements:} The author would like to thank Professor Vassili Nestoridis for introducing him to the problem and Konstantinos Makridis for helpful communications.

\bigskip

\noindent {\scshape Alexandros Eskenazis:} Department of Mathematics, University of Athens, Panepistimioupolis, 157 84, Athens, Greece.\\
E-mail: \href{mailto:alex\_eske@hotmail.com}{\url{alex\_eske@hotmail.com}}

\end{document}